\newtheorem{theorem}{Theorem}[section]
\newtheorem{proposition}[theorem]{Proposition}
\newtheorem{definition}[theorem]{Definition}
\newtheorem{corollary}[theorem]{Corollary}
\newtheorem{remark}[theorem]{Remark}
\newtheorem{lemma}[theorem]{Lemma}
\begin{document}

\title{Characterization of isoperimetric sets \\
inside almost-convex cones}
\author{Eric Baer}
\address{Department of Mathematics, University of Wisconsin--Madison, 480 Lincoln Dr., Madison, WI 53706}
\email{ebaer@math.wisc.edu}
\author{Alessio Figalli}
\address{The University of Texas at Austin, Mathematics Department, 2515 Speedway Stop C1200, Austin, TX 78712, USA}
\email{figalli@math.utexas.edu}

\thanks{Date: April 29, 2016.\\
The work of E.B. was partially supported by the National Science Foundation under Award Nos. DMS-1204557 and DMS-1147523.
The work of A.F. was partially supported by the National Science Foundation under Award Nos. DMS-1262411 and DMS-1361122.}

\begin{abstract}
In this note we characterize isoperimetric regions inside almost-convex cones.
More precisely, as in the case of convex cones, we show that 
isoperimetric sets are given by intersecting the cone with a ball centered at the origin. \end{abstract}

\maketitle

\section{Introduction}

Fix $N\geq 3$, and for each set of directions $A\subset S^{N-1}$ let
\begin{align*}
\mathscr{C}_A:=\bigcup_{t>0}\{t\xi:\xi\in A\}
\end{align*}
be the cone associated to $A$.  In this note we characterize isoperimetric sets in $\mathscr{C}_A$ (that is, sets which minimize the relative perimeter while satisfying a volume constraint), for cones which are asymptotically close to a certain class of fixed convex cones.  The characterization we obtain is that the isoperimetric set is given by the intersection of the cone with a ball centered at the origin, having radius chosen to satisfy the volume constraint.

When the cone is assumed convex, this type of characterization has a rich history.  The first result in this direction was obtained by P.L. Lions and F. Pacella \cite{LP} using the Brunn--Minkowski inequality.  For related studies of isoperimetric sets in cones, we refer to \cite{Morgan,RR,FI,CROS,CR,RV}, and the references cited in these works.

In the present work, we are motivated by recent progress in stability estimates which allow the characterization of isoperimetric sets in the convex cone to be extended into the non-convex range, at least for small perturbations. In particular, we shall rely on the approach introduced by Fuglede in \cite{F} 
(and further developed in many other papers, see for instance \cite{CL,FM,FJ,FFMMM,CLM} and the references therein) to establish a quantitative stability estimate for the isoperimetric inequality on sets which are small perturbations of the sphere. 
However, the result in \cite{F} exhibits a strong dependence on the translation invariance present in the Euclidean isoperimetric problem.  Since this invariance is no longer present when working in a cone strictly contained in $\mathbb{R}^N$, we must pursue an alternative approach.  For this, we restrict to cones contained in a spherical cap strictly smaller than the hemisphere.  The desired bounds then follow from estimates on stability of eigenvalues with Neumann boundary conditions.

To precisely state our main result, we need some additional notation.  For an open set $F\subset\mathbb{R}^N$ and a set of finite perimeter $E\subset\mathbb{R}^N$, let $P(E;F)$ denote the perimeter of $E$ relative to $F$; we refer to \cite{AFP} and \cite{Maggi} for definitions and fundamental properties of sets of finite perimeter and the relative perimeter functional.

Moreover, let $\pi_N:\mathbb{R}^{N}\rightarrow\mathbb{R}$ be the projection given by $\pi_N(x_1,\cdots,x_N):=x_N$ for $x=(x_1,\cdots,x_N)\in\mathbb{R}^N$.  For each $\eta>0$, let $S_+(\eta)\subset S^{N-1}$ be the spherical cap given by
\begin{align}
S_+(\eta):=\{\xi\in S^{N-1}:\pi_N(\xi)>\eta\}.\label{def-splus}
\end{align}

We now introduce the notion of uniform $C^{1,1}$ domains in the $S_+(\eta)$ setting.
This is defined requiring that every point of the boundary has both an interior and an exterior ball condition.

\begin{definition}[Uniform $C^{1,1}$ class $\Pi_+(\eta,r)$]
\label{def-unif-lip}
Given $\eta>0$ and $r>0$, we let $\Pi_+(\eta,r)$ denote the class of open sets $A$ compactly contained in $S_+(\eta)$ such that for every $x\in \partial A$ there exist a ball $B_r^+\subset S^{N-1}$ contained in $A$ and a ball $B_r^-\subset S^{N-1}$ contained in $S^{N-1}\setminus A$, both of radius $r$, such that $x \in \partial B_r^+\cap \partial B_r^-$.
\end{definition}

We are now ready to state our main result.  In this statement, as well as in the remainder of the paper, for two sets $X,Y\subset S^{N-1}$, the quantity $d_{L^\infty}(\partial X,\partial Y)$ refers to the Hausdorff distance between $\partial X$ and $\partial Y$ defined via the intrinsic metric on the sphere.

\begin{theorem}
\label{clm11}
Fix $N\geq 3$ and $\eta>0$.  Then, for any $r>0$, there exists $\epsilon=\epsilon(N,\eta,r)$ such that the following holds: given $A'\in \Pi_+(\eta,r)$, assume that there exists $A\in \Pi_+(\eta,r)$ such that 
\begin{align*}
\mathscr{C}_A\quad\textrm{is convex and}\quad d_{L^\infty}(\partial A',\partial A)<\epsilon.
\end{align*}
Then, for all $m>0$, the unique minimizer $E'_*$ of the functional $E\mapsto P(E;\mathscr{C}_{A'})$ among subsets $E'$ of $\mathscr{C}_{A'}$ having finite perimeter and $|E'|=m$ is given by $E'_*=B\cap \mathscr{C}_{A'}$, where $B\subset\mathbb{R}^N$ is the ball centered at the origin with radius chosen so that $|B\cap \mathscr{C}_{A'}|=m$.
\end{theorem}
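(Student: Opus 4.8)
The plan is to argue by contradiction and compactness, reducing the statement to two facts about $\mu_1(A)$, the first nonzero Neumann eigenvalue of the Laplace--Beltrami operator on an open set $A\subset S^{N-1}$: (i) a strict gap $\mu_1(A)>N-1$ whenever $\mathscr{C}_A$ is convex and $\overline{A}$ is contained in an open hemisphere; and (ii) stability of $\mu_1$ under $C^{1,1}$-uniform perturbations of the domain (in particular, $\mu_1(A_k)\to\mu_1(A)$ when $\partial A_k\to\partial A$ with all sets in a uniformly $C^{1,1}$ class). Since the isoperimetric problem in a cone is scale invariant, it suffices to treat a single value of $m$. Suppose the theorem fails: then there exist $A_k',A_k\in\Pi_+(\eta,r)$ with $\mathscr{C}_{A_k}$ convex, $d_{L^\infty}(\partial A_k',\partial A_k)\to 0$, and minimizers $E_k'\subset\mathscr{C}_{A_k'}$ of $E\mapsto P(E;\mathscr{C}_{A_k'})$ with $|E_k'|=m$ but $E_k'\neq B_k\cap\mathscr{C}_{A_k'}$, where $B_k$ is the origin-centered ball with $|B_k\cap\mathscr{C}_{A_k'}|=m$.

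First I pass to the limit. Since the sets in $\Pi_+(\eta,r)$ live in the compact sphere $S^{N-1}$ and the two-sided ball condition prevents degeneration, along a subsequence $\partial A_k'\to\partial A_\infty$ and $\partial A_k\to\partial A_\infty$ for a common limit $A_\infty$, with $\mathscr{C}_{A_\infty}$ convex (a Hausdorff limit of convex sets) and $\overline{A_\infty}\subset\overline{S_+(\eta)}$, hence inside an open hemisphere because $\eta>0$; moreover $A_\infty$ retains uniform $C^{1,1}$ bounds. Comparison with $B_k\cap\mathscr{C}_{A_k'}$ gives uniform perimeter bounds, and together with density estimates and a uniform diameter bound for isoperimetric sets in these cones (cf.\ \cite{RR,Morgan}) one extracts $E_k'\to E_\infty$ in $L^1$ with $|E_\infty|=m$; lower semicontinuity of perimeter along the converging ambient domains $\mathscr{C}_{A_k'}\to\mathscr{C}_{A_\infty}$ then shows $E_\infty$ is isoperimetric in $\mathscr{C}_{A_\infty}$, so by the convex case (\cite{LP}, with the equality characterization as in \cite{FI}) $E_\infty$ is the origin-centered ball-cap of volume $m$. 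By regularity theory for almost-minimizers with a uniformly $C^{1,1}$ obstacle, this convergence improves to $C^{1,\alpha}$ up to the boundary: for $k$ large $\partial E_k'$ is smooth, meets $\partial\mathscr{C}_{A_k'}$, and its free part is a radial graph $\{(\rho_k+w_k(\xi))\xi:\xi\in A_k'\}$ over $A_k'$, where $\rho_k$ is the radius of $B_k$, with $\|w_k\|_{C^{1,\alpha}}$ bounded and $\|w_k\|_{C^1}\to 0$. The constraint $|E_k'|=m$ forces $\int_{A_k'}w_k=O(\|w_k\|_{L^2}^2)$, and $w_k$ is non-constant, since otherwise $E_k'$ would be an origin-centered ball intersected with $\mathscr{C}_{A_k'}$ and hence, by the volume constraint, equal to $B_k\cap\mathscr{C}_{A_k'}$.

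Now the quantitative step, in the spirit of Fuglede's second-order expansion \cite{F}. Because the ball-cap $B_k\cap\mathscr{C}_{A_k'}$ has constant mean curvature and, being a piece of a sphere centered at the origin, meets $\partial\mathscr{C}_{A_k'}$ orthogonally, expanding the relative perimeter of the graph to second order in $w_k$ and using the volume constraint to eliminate $\int_{A_k'}w_k$ yields
\begin{align*}
0\ \geq\ P(E_k';\mathscr{C}_{A_k'})-P\big(B_k\cap\mathscr{C}_{A_k'};\mathscr{C}_{A_k'}\big)\ =\ \tfrac12\,\rho_k^{N-3}\,Q_k(\bar w_k)+o\big(\|w_k\|_{H^1}^2\big),
\end{align*}
where $\bar w_k:=w_k-\tfrac{1}{\mathcal{H}^{N-1}(A_k')}\int_{A_k'}w_k$ has zero mean and $Q_k(v):=\int_{A_k'}\big(|\nabla_{S^{N-1}}v|^2-(N-1)v^2\big)$ is precisely the Neumann quadratic form of $A_k'$; note that the deformation is genuinely unconstrained at $\partial A_k'$, which is why the threshold is $N-1$ and why no translational zero mode (as in \cite{F}) appears. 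By (i), $\mu_1(A_\infty)\geq N-1+2\delta$ for some $\delta>0$, so by (ii) $\mu_1(A_k')\geq N-1+\delta$ for $k$ large; combined with the Poincar\'e inequality on $A_k'$ (applicable since $\bar w_k$ has zero mean), this gives $Q_k(\bar w_k)\geq c(N,\delta)\|\bar w_k\|_{H^1}^2$. Since $\|\bar w_k\|_{H^1}$ is comparable to $\|w_k\|_{H^1}$ and strictly positive, and $\rho_k$ is bounded away from $0$ and $\infty$, the displayed identity becomes $0\geq c'\|w_k\|_{H^1}^2-o(\|w_k\|_{H^1}^2)>0$ for $k$ large --- a contradiction. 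Together with the (standard) existence of a minimizer, this proves the theorem.

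The step I expect to be the main obstacle is ingredient (i): convexity of $\mathscr{C}_A$ by itself gives only $\mu_1(A)\geq N-1$, with equality for half-spaces (i.e.\ hemispheres $A$), so one must quantify how $\overline{A}$ being strictly inside a hemisphere forces the inequality to be strict, and then transfer the gain to the nearby $A'$ through the Neumann eigenvalue stability (ii) --- whose proof relies essentially on the uniform $C^{1,1}$ geometry built into $\Pi_+(\eta,r)$. Secondary difficulties are the compactness in the non-compact cone setting (ruling out loss of mass at infinity and handling the varying ambient domains) and checking that the $C^1$-convergence of $E_k'$ to the ball-cap provided by regularity theory is strong enough to absorb the Taylor remainder into $o(\|w_k\|_{H^1}^2)$.
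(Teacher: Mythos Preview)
Your proposal is correct and follows essentially the same route as the paper: a contradiction--compactness argument reducing to the convex case of Lions--Pacella, followed by $\epsilon$-regularity to write the minimizers as radial graphs $w_k$ with $\|w_k\|_{C^1}\to 0$, then a Fuglede-type second-order expansion combined with the Neumann spectral gap $\mu_1>N-1$ (strict because $\overline{A}\subset S_+(\eta)$, via Escobar's theorem) and its stability under domain perturbation in the class $\Pi_+(\eta,r)$. The obstacles you flag---the strict eigenvalue gap and its transfer to $A'$, the no-mass-at-infinity step (handled via \cite{RR}), and the boundary regularity needed to justify the graph parametrization (handled via \cite{DM})---are exactly the points the paper isolates and proves separately before assembling the contradiction.
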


\begin{remark}
{\em 
By the analysis performed in this paper (see in particular Lemma \ref{lem1}) it follows that $B\cap \mathscr{C}_{A'}$ is a uniformly stable minimizer (in other words,
the second variation of the perimeter functional at fixed volume is uniformly positive).
Thanks to this fact, it follows by our analysis and the selection principle introduced in \cite{CL} that a sharp quantitative version of the isoperimetric inequality
as the one in \cite{FI} (see also \cite{FuMP,FMP,CL,FJ}) holds in our setting.}
\end{remark}

\begin{remark}
{\em 
Since our main Theorem \ref{clm11} above is stated for $C^{1,1}$ domains,
we will also assume in all other statements that our sets have $C^{1,1}$ boundary.
However, it is interesting to observe that 
most of our arguments require only the Lipschitz regularity of our sets, rather than their $C^{1,1}$ regularity. The crucial point where we need higher regularity on our sets is to show existence of minimizers (see Proposition \ref{prop-existence}) and to apply the $\epsilon$-regularity theory for almost minimizers of the perimeter up to the boundary (see for instance \cite{DM} and the references therein). Although one could probably slightly weaken this assumption to some $C^{1,\alpha}$ regularity with $\alpha>0$, it is unclear to us whether our method can be extended to Lipschitz domains.}
\end{remark}

\medskip
{\it Acknowledgments:} The intuition that the result of Lions and Pacella \cite{LP} could be extended to almost-convex cones was already suggested by Ennio De Giorgi. The second author is grateful to Filomena Pacella for pointing out this problem to his attention.
Both authors thank Gian Paolo Leonardi for a careful reading of the manuscript and for pointing out a small gap in one of the proofs.

\section{Preliminaries}
\label{sect:prelim}

In the next three subsections, we prepare several analytic results which will be used in the proof of Theorem $\ref{clm11}$.  In Section $2.1$ we discuss the existence of minimizers stated in the theorem.  In Section $2.2$ we adapt arguments of Chenais \cite{Chenais} from the Euclidean setting to show the existence of uniformly bounded extension operators for sets in the class $\Pi_+(\eta,r)$ (with suitable values of the parameters); as a consequence, we obtain some stability results for eigenvalues of the Neumann Laplacian associated to domains in this class.  Lastly, in Section 2.3 we use the stability results obtained in Section 2.2 to establish a sharp Poincar\'e estimate which will form the core analytic ingredient in the proof of Theorem $\ref{clm11}$.

\subsection{Existence of minimizers}

In this section, we apply the result of Ritor\'e and Rosales \cite{RR} to establish existence of minimizers in the setting of Theorem $\ref{clm11}$.  We in particular appeal to \cite[Proposition $3.5$]{RR}, which gives existence of minimizers for smooth ($C^{1}$ suffices) cones in $S_+$
(note that the existence of minimizers is known also for non-smooth convex cones, and it has been proved in \cite{FI} using optimal transport techniques).

Hence, we can state the following result:

\begin{proposition}[Existence of minimizers in the setting of Theorem \ref{clm11}]
\label{prop-existence}
Fix $\eta>0$ and $r>0$.  Then for every $m>0$ and $A'\in \Pi_+(\eta,r)$ there exists a set of finite perimeter $E_*'\subset \mathscr{C}_{A'}$ having $|E_*'|=m$ and such that $E_*'$ is a minimizer for the functional $E\mapsto P(E;\mathscr{C}_{A'})$ among all such sets.
\end{proposition}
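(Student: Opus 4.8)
The plan is to reduce the statement to the existence result of Ritoré and Rosales, \cite[Proposition~3.5]{RR}, which applies to cones over smooth domains in the open hemisphere. The obstruction to applying it directly is that a priori we do not control the loss of mass at infinity when minimizing the relative perimeter over an unbounded cone, and that the cone $\mathscr{C}_{A'}$ is unbounded. The first step is therefore to recall that the relative isoperimetric problem in $\mathscr{C}_{A'}$ is invariant under the scaling $E\mapsto \lambda E$ (with $|\lambda E|=\lambda^N|E|$ and $P(\lambda E;\mathscr{C}_{A'})=\lambda^{N-1}P(E;\mathscr{C}_{A'})$), so it suffices to produce a minimizer for one value of $m$, say $m=1$; the minimizer for general $m$ is then obtained by dilation. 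The second step is to observe that $A'\in\Pi_+(\eta,r)$ is, by Definition~\ref{def-unif-lip}, compactly contained in $S_+(\eta)$ and has $C^{1,1}$ (in particular $C^1$) boundary, so it falls within the hypotheses of \cite[Proposition~3.5]{RR}.

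Concretely, I would run the direct method on a minimizing sequence $\{E_k\}$ with $|E_k|=1$ and $P(E_k;\mathscr{C}_{A'})\to I_{\mathscr{C}_{A'}}(1)$, the infimum. Using the isoperimetric inequality in the cone (or simply comparing with the Euclidean isoperimetric inequality applied to $E_k$ regarded as a subset of $\mathbb{R}^N$, since $P(E_k;\mathbb{R}^N)\ge P(E_k;\mathscr{C}_{A'})$ is false in general — rather one should use that $\mathscr{C}_{A'}$ is contained in a convex cone and compare with that), one gets that $I_{\mathscr{C}_{A'}}(1)>0$ is finite. The uniform perimeter bound gives, after passing to a subsequence, local $L^1_{\mathrm{loc}}$ convergence of $\chi_{E_k}$ to some $\chi_{E}$ with $E\subset\mathscr{C}_{A'}$ and, by lower semicontinuity of relative perimeter, $P(E;\mathscr{C}_{A'})\le I_{\mathscr{C}_{A'}}(1)$ and $|E|\le 1$. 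The main obstacle, and the point where one genuinely needs \cite{RR}, is to rule out loss of mass: one must show $|E|=1$, i.e.\ that no volume escapes to infinity along the noncompact cone. I would handle this exactly as in \cite[Proposition~3.5]{RR}: the concavity/subadditivity properties of the isoperimetric profile $I_{\mathscr{C}_{A'}}$, together with the fact that for cones the profile satisfies $I_{\mathscr{C}_{A'}}(m)=I_{\mathscr{C}_{A'}}(1)\,m^{(N-1)/N}$, force any ``split'' minimizing configuration (a piece of mass $\theta\in(0,1)$ staying in a bounded region and a piece of mass $1-\theta$ escaping) to have total perimeter strictly larger than $I_{\mathscr{C}_{A'}}(1)$ unless $\theta\in\{0,1\}$; combined with a density estimate near the origin this excludes $\theta=0$. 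Hence $|E|=1$ and $E$ is the desired minimizer. Finally, rescaling $E$ by $m^{1/N}$ produces a minimizer of relative perimeter in $\mathscr{C}_{A'}$ with volume $m$, for every $m>0$, which is the assertion of the proposition. The $C^1$ regularity of $\partial A'$ enters precisely in this last concentration step (it is what \cite[Proposition~3.5]{RR} requires to carry out the cone-to-boundary comparison), and this is why the hypothesis $A'\in\Pi_+(\eta,r)$ rather than merely ``$A'$ Lipschitz'' is used here.
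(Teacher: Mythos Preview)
Your proposal is correct and follows the same approach as the paper: both reduce the existence question to \cite[Proposition~3.5]{RR}, noting that $A'\in\Pi_+(\eta,r)$ is $C^{1,1}$ (hence $C^1$) and contained in the open hemisphere, which is exactly the setting of that result. In fact the paper gives no proof beyond the citation, whereas you additionally unpack the direct-method argument (scaling to $m=1$, compactness, and the subadditivity/profile argument to rule out mass loss at infinity) that underlies \cite{RR}; this extra detail is consistent with what is in \cite{RR} and does not diverge from the paper's route.
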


\subsection{Extension results and stability of eigenvalues}

In what follows, we let $W^{1,2}(S^{N-1})$ be the usual Sobolev space defined on the sphere (see, e.g. \cite[\S 7.3]{LM}).  Moreover, for each open set $A\subset S^{N-1}$, let $W^{1,2}(A)$ be the space obtained by completion of smooth functions with respect to the norm
\begin{align*}
\lVert u\rVert_{W^{1,2}(A)}^2:=\lVert u\rVert_{L^2(A)}^2+\lVert \nabla u\rVert_{L^2(A)}^2,
\end{align*}
where $\nabla$ is the usual gradient operator on the sphere.

Fix $\eta>0$.  Then for $r$ as in the definition of the uniform $C^{1,1}$ class (Definition $\ref{def-unif-lip}$), restricting considerations to domains in $\Pi_+(\eta,r)$ allows us to note that the analysis of \cite{Chenais} in the Euclidean setting  implies the existence of a family of uniformly bounded extension operators.

\begin{proposition}[Domains in $\Pi_+$ have uniform extension operators] 
\label{prop-extension}
Fix $N\geq 3$, $\eta>0$, and $r>0$.  Then there exists $C>0$ such that for each $A\in \Pi_+(\eta,r)$ there exists a linear extension operator
\begin{align*}
T_A:W^{1,2}(A)\rightarrow W^{1,2}(S_+)
\end{align*}
with
\begin{align*}
\lVert T_A\varphi\rVert_{W^{1,2}(S_+)}\leq C\lVert \varphi\rVert_{W^{1,2}(A)},
\end{align*}
where $S_+=S_+(\eta)$ is defined as in ($\ref{def-splus}$).
\end{proposition}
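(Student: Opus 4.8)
The plan is to adapt Chenais's construction of extension operators for Euclidean Lipschitz domains with uniform cone property to the spherical setting, keeping careful track of the fact that the uniformity constant depends only on $N$, $\eta$, and $r$. I would proceed as follows.

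\medskip

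\emph{Step 1: Local charts with uniform constants.} Fix $A \in \Pi_+(\eta,r)$. For each boundary point $x \in \partial A$, the interior and exterior ball conditions of radius $r$ give, after choosing geodesic normal coordinates centered at $x$ in which the common inner normal to $B_r^+$ and $B_r^-$ points in a fixed direction, a representation of $\partial A$ near $x$ as the graph of a function whose $C^{1,1}$ norm is controlled purely by $r$ (and the curvature of $S^{N-1}$, hence by $N$). Because $A$ is compactly contained in $S_+(\eta)$ and the geometry of the cap $S_+(\eta)$ is fixed, one obtains a radius $\rho = \rho(N,\eta,r)>0$ and a finite overlap bound $M = M(N,\eta,r)$ such that $\partial A$ can be covered by geodesic balls $B(x_i,\rho)$, $i=1,\dots,k$, with bounded overlap $\le M$, each equipped with a bi-Lipschitz chart $\Phi_i$ onto a Euclidean half-cylinder in which $A$ corresponds to a Lipschitz subgraph, with Lipschitz and bi-Lipschitz constants bounded by $C(N,\eta,r)$. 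Crucially, $k$ itself need not be bounded, only the overlap $M$.

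\medskip

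\emph{Step 2: Local reflection/extension in each chart.} On $B(x_i,\rho)$, pulling back via $\Phi_i$ reduces the problem to extending a $W^{1,2}$ function across a Lipschitz graph in $\mathbb{R}^{N-1}$, for which one uses the standard reflection-type extension (e.g. the Stein-type extension across a Lipschitz graph, or the simpler construction for graphs over a fixed cone direction as in Chenais); this gives local extension operators $T_i$ with norm $\le C(N,\eta,r)$, because the only data entering the estimate are the Lipschitz and bi-Lipschitz constants from Step 1. Away from $\partial A$, in the interior, no extension is needed. I would then fix, once and for all, a partition of unity $\{\psi_i\}$ subordinate to the cover $\{B(x_i,\rho)\} \cup \{A\}$ with $\|\nabla \psi_i\|_\infty \le C/\rho \le C(N,\eta,r)$; the existence of such a partition with gradient bounds independent of $A$ follows from the uniform radius $\rho$ and the bounded overlap $M$.

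\medskip

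\emph{Step 3: Glue and estimate.} Define $T_A \varphi := \sum_i \psi_i\, T_i(\varphi|_{B(x_i,\rho)\cap A}) + \psi_0 \varphi$ where $\psi_0$ handles the interior piece. This clearly extends $\varphi$ (the $\psi_i$ sum to $1$). For the norm bound, expand $\|T_A\varphi\|_{W^{1,2}(S_+)}^2$; the bounded overlap $M$ converts the sum of local $W^{1,2}$ norms into $\le M$ times the full norm, the product rule brings in the $\|\nabla\psi_i\|_\infty$ factors which are dominated by $C(N,\eta,r)$, and each local term is controlled by $\|T_i\|\,\|\varphi\|_{W^{1,2}(B(x_i,\rho)\cap A)} \le C(N,\eta,r)\|\varphi\|_{W^{1,2}(A)}$. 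Summing and using bounded overlap once more yields $\|T_A\varphi\|_{W^{1,2}(S_+)} \le C(N,\eta,r)\|\varphi\|_{W^{1,2}(A)}$, which is the claim.

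\medskip

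The main obstacle is Step 1: producing a \emph{uniform} cover with uniform chart constants and, above all, a uniform bound $M$ on the overlap multiplicity and a uniform lower bound $\rho$ on the chart radius, using only the two-sided ball condition of radius $r$ and the containment in $S_+(\eta)$. This is where the two-sided $r$-ball condition is essential (it controls both how oscillatory $\partial A$ can be and how close distinct sheets of $\partial A$ can come), and it is the point at which one must be careful that the curvature of the sphere only perturbs the Euclidean estimates of Chenais by factors depending on $N$. Once the geometry is uniformized, Steps 2 and 3 are routine adaptations of the classical Euclidean argument.
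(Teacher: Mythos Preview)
Your proposal is correct, but it works much harder than necessary. The paper's proof is a two-line reduction: since $A\subset S_+(\eta)\subset\subset S^{N-1}\cap\{x_N>0\}$, the vertical projection $\hat\pi_N(x',x_N):=x'$ is a smooth diffeomorphism from $S_+(\eta)$ onto its image in $\mathbb{R}^{N-1}$, with bi-Lipschitz constants depending only on $\eta$. Push $A$ down to $\hat\pi_N(A)\subset\mathbb{R}^{N-1}$; the two-sided ball condition survives with new radius depending only on $N,\eta,r$, so Chenais's Euclidean result \cite[Theorem II.1]{Chenais} applies directly and gives a uniformly bounded extension operator $W^{1,2}(\hat\pi_N(A))\to W^{1,2}(\hat\pi_N(S_+))$, which one then pulls back to the sphere.

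What you do instead is rebuild Chenais's machinery intrinsically on the sphere: uniform boundary charts from the two-sided ball condition, local reflection extensions, and gluing via a partition of unity with bounded overlap. This is a genuinely different route. Your argument is more general---it would go through on any compact Riemannian manifold without relying on a single global chart---and your identification of the bounded-overlap constant $M$ (rather than the number $k$ of charts) as the quantity that must be uniform is the right emphasis. The paper's approach, by contrast, buys brevity by exploiting the specific fact that $S_+(\eta)$ sits over a single Euclidean chart, so the entire uniformization of the geometry is absorbed into the bi-Lipschitz constants of one global diffeomorphism and Chenais's theorem can be quoted as a black box.
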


\begin{proof}
Since $A\subset S_+\subset \subset S^{N-1}\cap \{x_N>0\}$,
we note that the projection
$$
\hat \pi_N:\mathbb{R}^N\to \mathbb{R}^{N-1},\qquad \hat\pi_N(x',x_N):=x',
$$
is a smooth diffeomorphism from $S_+$ to $\hat\pi_N(S_+)$.
Hence, it is enough to apply \cite[Theorem II.1]{Chenais} 
to find a linear bounded extension operator from $W^{1,2}(\hat\pi_N(A))$ to $W^{1,2}(\hat\pi_N(S_+))$,
and then lift the result back to the sphere using the smooth map $\hat\pi_N^{-1}:\hat\pi_N(S_+)\to S_+$.
\end{proof}

In the rest of this subsection, we apply Proposition $\ref{prop-extension}$ to obtain stability results for the first non-zero Neumann eigenvalue of $-\Delta$ on domains $A$ in the class $\Pi_+=\Pi_+(\eta,r)$ with suitable values of $\eta$ and $r$.  

More precisely, for $A\in \Pi_+$ we set
\begin{align}
\mu_1(A):=\inf\bigg\{\frac{\lVert \nabla u\rVert_{L^2(A)}}{\lVert u\rVert_{L^2(A)}}:u\in W^{1,2}(A),\int_A u\,d\mathcal{H}^{N-1}=0, u\not\equiv 0\bigg\}.\label{eq-variational}
\end{align}
(Here and in the following, $d\mathcal{H}^{N-1}$ denotes the surface measure on $S^{N-1}$.)

Our first result in this direction is a pointwise stability result for $\mu_1$. 

\begin{proposition}[Pointwise stability of $\mu_1$ in $\Pi_+$]
\label{prop-convergence-eigenvalues}
Fix $N\geq 3$, $\eta>0$, and $r>0$.  If $(A_n)_{n\geq 1}$ is a sequence of sets in $\Pi_+=\Pi_+(\eta,r)$ such that 
\begin{align*}
d_{L^\infty}(\partial A_n,\partial A)\rightarrow 0\,\,\textrm{as}\,\, n\rightarrow\infty
\end{align*}
for some $A\in \Pi_+$, then $\mu_1(A_n)\rightarrow \mu_1(A)\,\,\textrm{as}\,\, n\rightarrow\infty$.
\end{proposition}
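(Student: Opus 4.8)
The plan is to prove pointwise stability of $\mu_1$ by the standard two-sided argument: $\limsup_n \mu_1(A_n) \le \mu_1(A)$ via using a competitor built from a near-optimal function on $A$, and $\liminf_n \mu_1(A_n) \ge \mu_1(A)$ via a compactness argument on the (extended) optimal functions for $A_n$. Both directions rely crucially on the uniform extension operators from Proposition \ref{prop-extension}, which give us a fixed target space $W^{1,2}(S_+)$ in which to compare functions living on the varying domains $A_n$.

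For the upper bound, fix $\delta>0$ and pick $u\in W^{1,2}(A)$ with $\int_A u\,d\mathcal H^{N-1}=0$, $u\not\equiv 0$, and $\lVert\nabla u\rVert_{L^2(A)}^2/\lVert u\rVert_{L^2(A)}^2 < \mu_1(A)^2+\delta$. Extend $u$ to $\tilde u := T_A u \in W^{1,2}(S_+)$ and restrict $\tilde u$ to each $A_n$; this gives a valid trial function once we correct its mean. The key point is that $|A_n\triangle A|\to 0$ as $n\to\infty$ (this follows from $d_{L^\infty}(\partial A_n,\partial A)\to 0$ together with the uniform interior/exterior ball conditions defining $\Pi_+(\eta,r)$, which prevent the symmetric difference from having positive measure while the boundaries converge). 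Hence $\int_{A_n}\tilde u \to \int_A u = 0$, $\lVert \tilde u\rVert_{L^2(A_n)}^2 \to \lVert u\rVert_{L^2(A)}^2 > 0$, and $\lVert\nabla\tilde u\rVert_{L^2(A_n)}^2 \to \lVert\nabla u\rVert_{L^2(A)}^2$ (using that $\tilde u\in W^{1,2}(S_+)$ is a fixed function, so its energy over $A_n$ converges to its energy over $A$ by dominated convergence). Subtracting the (vanishing in the limit) mean of $\tilde u$ over $A_n$ and using it in the variational definition \eqref{eq-variational} for $A_n$ yields $\limsup_n \mu_1(A_n)^2 \le \mu_1(A)^2+\delta$; let $\delta\to 0$.

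For the lower bound, let $u_n\in W^{1,2}(A_n)$ be (near-)optimal: $\int_{A_n}u_n=0$, $\lVert u_n\rVert_{L^2(A_n)}=1$, and $\lVert\nabla u_n\rVert_{L^2(A_n)}^2 \le \mu_1(A_n)^2 + 1/n$. Note the $A_n$ do admit optimizers because the inclusion $W^{1,2}(A_n)\hookrightarrow L^2(A_n)$ is compact for $C^{1,1}$ (indeed Lipschitz) domains, but even without that we may work with near-minimizers. Extend to $\tilde u_n := T_{A_n}u_n$; by Proposition \ref{prop-extension} and the upper bound just proved (which gives $\mu_1(A_n)$ bounded), $\lVert\tilde u_n\rVert_{W^{1,2}(S_+)}$ is uniformly bounded, so up to a subsequence $\tilde u_n \rightharpoonup \tilde u$ weakly in $W^{1,2}(S_+)$ and strongly in $L^2(S_+)$ (Rellich on the fixed domain $S_+$, which is itself in a suitable $C^{1,1}$ class — or simply work on a smooth domain containing all the $A_n$). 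Using $|A_n\triangle A|\to 0$ again, the strong $L^2$ convergence transfers to $A$: $\lVert \tilde u\rVert_{L^2(A)}^2 = \lim \lVert u_n\rVert_{L^2(A_n)}^2 = 1$ and $\int_A\tilde u = \lim\int_{A_n}u_n = 0$, so $u:=\tilde u|_A$ is an admissible competitor for $\mu_1(A)$. Finally, weak lower semicontinuity of the Dirichlet energy gives $\lVert\nabla u\rVert_{L^2(A)}^2 \le \liminf_n \lVert\nabla\tilde u_n\rVert_{L^2(A_n)}^2$; since $\tilde u_n = u_n$ on $A_n$, this $\liminf$ is $\le \liminf_n(\mu_1(A_n)^2 + 1/n) = \liminf_n\mu_1(A_n)^2$. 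Therefore $\mu_1(A)^2 \le \liminf_n \mu_1(A_n)^2$, and combined with the upper bound (and a subsequence-of-subsequences argument to upgrade the subsequential statement to full convergence) we conclude $\mu_1(A_n)\to\mu_1(A)$.

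The main obstacle I anticipate is the careful justification of $|A_n\triangle A|\to 0$ from Hausdorff convergence of boundaries — Hausdorff boundary convergence alone does not imply measure convergence of the sets in general, but the uniform two-sided ball condition of $\Pi_+(\eta,r)$ rules out the pathological cases (thin tentacles, oscillations, splitting into pieces) and forces $A_n$ and $A$ to be, near every point, uniform graphs of $C^{1,1}$ functions with a common modulus, so that $d_{L^\infty}(\partial A_n,\partial A)<\epsilon$ yields $|A_n\triangle A|\le C\epsilon$. A secondary technical point is ensuring all extended functions live in one fixed function space with a compact embedding into $L^2$; this is handled by choosing $S_+=S_+(\eta)$ (or a slightly larger smooth domain) once and for all and invoking Proposition \ref{prop-extension}, whose constant $C$ is uniform over $\Pi_+(\eta,r)$.
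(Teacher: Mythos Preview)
Your argument is correct and complete in outline; the only step that deserves an extra line is the lower-semicontinuity claim $\lVert\nabla \tilde u\rVert_{L^2(A)}^2 \le \liminf_n \lVert\nabla \tilde u_n\rVert_{L^2(A_n)}^2$ across varying domains, which follows cleanly once you note that $\chi_{A_n}\nabla\tilde u_n \rightharpoonup \chi_A\nabla\tilde u$ weakly in $L^2(S_+)$ (strong $L^2$ convergence of $\chi_{A_n}$ to $\chi_A$ combined with weak convergence of $\nabla\tilde u_n$) and then invoke weak lower semicontinuity of the norm on the fixed space $L^2(S_+)$.

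Your route, however, differs from the paper's. The paper follows Chenais (as presented in Bucur--Buttazzo and Henrot) and works at the level of the \emph{resolvent operators} $R_n=(I-\Delta_{A_n}^{(N)})^{-1}\circ P_n$ and $R=(I-\Delta_A^{(N)})^{-1}\circ P$, viewed as compact self-adjoint operators on $L^2(S_+)$ after extension by zero. It shows $\lVert R_n-R\rVert_{L^2\to L^2}\to 0$, from which spectral convergence (of all eigenvalues, not just the first nontrivial one) follows by abstract functional analysis. The proof of norm convergence uses the uniform extension operators and a Mosco-convergence style argument, and the paper stresses that this framework carries through for merely uniformly Lipschitz domains. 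Your direct variational two-sided bound is more elementary and self-contained, avoids resolvents and Mosco convergence entirely, and in fact enjoys the same generality: every ingredient you use (uniform extensions, Rellich on the fixed ambient domain, $|A_n\triangle A|\to 0$) is available for uniformly Lipschitz domains as well. What the paper's approach buys is a single estimate from which convergence of the full spectrum follows at once, together with alignment with the standard references; what your approach buys is a transparent, minimal-machinery proof tailored to $\mu_1$. The paper itself remarks, just before its proof, that for $C^{1,1}$ domains an even shorter proof is available via bi-Lipschitz maps with constant tending to $1$; your argument sits naturally between that shortcut and the full Chenais machinery.
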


Since our domains are uniformly $C^{1,1}$, the stability of $\mu_1$ is actually easy to prove using that $A_n$ and $A$ can be mapped one into the other via a bi-Lipschitz map with bi-Lipschitz constant converging to $1$ as $n\to \infty$. However, it is interesting to notice that 
Proposition $\ref{prop-convergence-eigenvalues}$
holds also for Lipschitz domains, and the proof in this case is essentially due to Chenais \cite{Chenais} in the Euclidean case; see also the treatments in \cite[Theorem 2.3.25]{Henrot} and \cite[Theorem 7.4.25]{BucurButtazzo}, and the references cited in these works, as well as recent work \cite{DCL} in the general setting of metric measure spaces.
 
To emphasize the fact that Proposition \ref{prop-convergence-eigenvalues} holds for Lipschitz domains, we present a detailed outline of Chenais' argument for the convenience of the reader.  Our discussion also follows the arguments leading to the proof of \cite[Corollary $7.4.25$]{BucurButtazzo}.

\begin{proof}
For each $n\geq 1$, let $T_n:W^{1,2}(A_n)\rightarrow W^{1,2}(S_+)$, $n\geq 1$, and $T:W^{1,2}(A)\rightarrow W^{1,2}(S_+)$ be the extension operators given by Proposition $\ref{prop-extension}$, and let $P_n:L^2(S_+)\rightarrow L^2(A_n)$, $n\geq 1$, and $P:L^2(S_+)\rightarrow L^2(A)$ denote the restriction operators given by $P_nf:=f|_{A_n}$ and $Pf:=f|_A$ for $f\in L^2(S_+)$.  

Now, define 
\begin{align*}
R_n:=(I-\Delta_{A_n}^{(N)})^{-1}\circ P_n:L^2(S_+)\rightarrow W^{2,2}(A_n)
\end{align*}
for $n\geq 1$, and, similarly,
\begin{align*}
R:=(I-\Delta_A^{(N)})^{-1}\circ P:L^2(S_+)\rightarrow W^{2,2}(A)
\end{align*}
as the resolvent operators associated to the Neumann Laplacian on $A_n$, $n\geq 1$, and $A$, respectively.  In what follows, we will use the convention that for all $f\in L^2(S_+)$, $R_nf$ and $Rf$ are extended by $0$ outside $A_n$ and $A$, respectively.

We first note that standard elliptic PDE estimates on $S_+$ lead to the definition of $R_n$, $n\geq 1$, and $R$ as compact, positive-definite, self-adjoint operators on the appropriate subspaces $L^2(A_n)$ and $L^2(A)$.  Moreover, we have the bounds
\begin{align}
\lVert R_nf\rVert_{W^{2,2}(A_n)}\lesssim \lVert f\rVert_{L^2(A_n)},\,\,n\geq 1,\quad\textrm{and}\quad \lVert Rf\rVert_{W^{2,2}(A_n)}\lesssim \lVert f\rVert_{L^2(A)}.\label{eq-elliptic}
\end{align}

Let $\lambda_n$, $n\geq 1$, and $\lambda_*$ denote the second eigenvalues (when arranged in decreasing order) of $R_n$, $n\geq 1$, and $R$, respectively.  Then 
\begin{align*}
\mu_1(A_n)=\sqrt{(1/\lambda_n)-1},\,\,n\geq 1
\end{align*}
and
\begin{align*}
\mu_1(A)=\sqrt{(1/\lambda_*)-1}.
\end{align*}
To show the desired convergence of $\mu_1(A_n)$ to $\mu_1(A)$, it therefore suffices to show convergence of $\lambda_n$ to $\lambda_*$.  For this, standard functional analysis arguments (see, e.g. \cite[Theorem $2.3.1$]{Henrot}) reduce the issue to showing
\begin{align}
\lVert R_n- R\rVert_{L^2(S_+)\rightarrow L^2(S_+)}\rightarrow 0\label{eq-star}
\end{align}
as $n\rightarrow \infty$.

Following \cite{BucurButtazzo}, by the definition of the operator norm, choose a sequence $(f_n)\subset L^2(S_+)$ with $\lVert f_n\rVert_{L^2(S_+)}=1$ so that for each $n\geq 1$,
\begin{align*}
\lVert R_n-R\rVert_{L^2(S_+)\rightarrow L^2(S_+)}&\leq \lVert (R_n-R)f_n\rVert_{L^2(S_+)}+\frac1n.
\end{align*}
Since the sequence $(f_n)$ is bounded in $L^2(S_+)$, we may find a subsequence which converges weakly to some $f\in L^2(S_+)$.  Renumbering the subsequence as $(f_n)$ again, this leads to
\begin{align}
\nonumber \lVert R_n-R\rVert_{L^2(S_+)\rightarrow L^2(S_+)}&\leq\limsup_{n\rightarrow\infty}\lVert R_n(f_n-f)\rVert_{L^2(S_+)}\\
\nonumber &\hspace{0.2in}+\limsup_{n\rightarrow\infty} \lVert (R_n-R)f\rVert_{L^2(S_+)}\\
&\hspace{0.2in}+\limsup_{n\rightarrow\infty} \lVert R(f-f_n)\rVert_{L^2(S_+)}.\label{eq-three}
\end{align}

We begin by observing that
\begin{align}
\lVert R_nf-Rf\rVert_{L^2(S_+)}\rightarrow 0\quad\textrm{as}\,\,n\rightarrow\infty.\label{eq-rn-r}
\end{align}
For this, we refer to the proofs of \cite[Theorem 7.2.7 and Proposition 7.2.4]{BucurButtazzo}, which hold equally well in the $S_+$ setting (the arguments are expressed in the language of Mosco convergence, but ultimately rely on basic properties of the operators $R_n$ and $R$, and the existence of the extension operators $T_n$ and $T$).

We now estimate the remaining terms.  Observe that for each $n\geq 1$, the estimates ($\ref{eq-elliptic}$) combined with the boundedness of $T_n$ give
\begin{align*}
\lVert T_nR_n(f_n-f)\rVert_{W^{1,2}(S_+)}&\leq C\lVert R_n(f_n-f)\rVert_{W^{1,2}(A_n)}\leq 2C\lVert f\rVert_{L^2}
\end{align*}
so that the compactness of the embedding $W^{1,2}(S_+)\hookrightarrow L^2(S_+)$ implies we may find $g\in L^2(S_+)$ such that $T_nR_n(f_n-f)$ converges strongly to $g$ in $L^2(S_+)$, up to extraction of a subsequence.  This implies that $\|R_n(f_n-f)-g\|_{L^2(A_n)}\to 0$.  On the other hand, for any $\phi\in L^2(S_+)$, we have
\begin{align}
\nonumber \langle R_n(f_n-f),\phi\rangle_{L^2(A_n)}&=\langle f_n-f,R_n\phi\rangle\\
&=\langle f_n-f,(R_n-R)\phi\rangle+\langle f_n-f,R\phi\rangle.\label{eq-rn-phi}
\end{align}
Since $R_n\phi$ converges strongly to $R\phi$ in $L^2(S_+)$ (see ($\ref{eq-rn-r}$) above, which did not rely on any properties of $f$ besides $f\in L^2(S_+)$), the right-hand side of (\ref{eq-rn-phi}) tends to $0$ as $n\rightarrow\infty$ (in particular, we note that the strong convegence also holds in $L^2(A_n)$, and use the weak convergence of $f_n$ to $f$ on each term).  This means that $R_n(f_n-f)$ converges weakly to $0$ in $L^2(A_n)$, so that uniqueness of weak limits gives $g|_{A}=0$.  Thus, $R_n(f_n-f)$ converges strongly to $0$ in $L^2(A_n)$ (and therefore in $L^2(S_+)$ as well).
Also, an identical argument with $R_n$ replaced by $R$ and $T_n$ replaced by $T$ shows that $R(f_n-f)$ converges strongly to $0$ in $L^2(A)$ (and thus in $L^2(S_+)$).

Assembling the above estimates with ($\ref{eq-three}$), we obtain ($\ref{eq-star}$) as desired.
\end{proof}

A compactness argument now gives uniform continuity of the eigenvalue.

\begin{corollary}[Uniform continuity of $\mu_1$ in $\Pi_+$]
\label{cor-unif-eigenvalues}
Fix $N\geq 3$, $\eta>0$, and $r>0$.  Then for all $\epsilon>0$ there exists $\delta>0$ such that for all $A,A'\in \Pi_+(\eta,r)$,
\begin{align*}
d_{L^\infty}(\partial A',\partial A)<\delta
\end{align*}
implies 
\begin{align*}
|\mu_1(A')-\mu_1(A)|<\epsilon.
\end{align*}
\end{corollary}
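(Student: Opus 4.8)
The plan is to argue by contradiction, combining the pointwise stability of Proposition~\ref{prop-convergence-eigenvalues} with the compactness of the class $\Pi_+(\eta,r)$. Suppose the assertion fails. Then there exist $\epsilon_0>0$ and sequences $(A_n),(A_n')$ in $\Pi_+(\eta,r)$ such that $d_{L^\infty}(\partial A_n',\partial A_n)\to 0$ while $|\mu_1(A_n')-\mu_1(A_n)|\geq\epsilon_0$ for every $n$. Fix once and for all some $\eta'\in(0,\eta)$. Since $S_+(\eta)\subset S_+(\eta')$ and a set compactly contained in $S_+(\eta)$ is automatically compactly contained in $S_+(\eta')$ (while the interior and exterior ball conditions are unaffected), we have $A_n,A_n'\in\Pi_+(\eta',r)$ for all $n$; note also that $\mu_1$, as defined in~\eqref{eq-variational}, does not depend on the choice of ambient cap.

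Next I would extract a limit domain. All the sets $A_n$ lie in the fixed compact region $\overline{S_+(\eta)}$ and satisfy a uniform interior and exterior ball condition of radius $r$, so (after flattening via the diffeomorphism $\hat\pi_N$ of Proposition~\ref{prop-extension}) their boundaries are locally graphs of functions with a uniform $C^{1,1}$ bound. By Arzel\`a--Ascoli, a subsequence --- not relabeled --- satisfies $d_{L^\infty}(\partial A_n,\partial A_\infty)\to 0$ for some open set $A_\infty$ that again obeys the interior and exterior ball conditions of radius $r$ (these being closed under Hausdorff convergence of boundaries). The interior ball condition forces $|A_\infty|$ to be at least the volume of a ball of radius $r$, so $A_\infty\neq\emptyset$; moreover $\overline{A_\infty}\subset\overline{S_+(\eta)}\subset S_+(\eta')$, hence $A_\infty\in\Pi_+(\eta',r)$. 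Finally, the triangle inequality gives $d_{L^\infty}(\partial A_n',\partial A_\infty)\leq d_{L^\infty}(\partial A_n',\partial A_n)+d_{L^\infty}(\partial A_n,\partial A_\infty)\to 0$.

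Applying Proposition~\ref{prop-convergence-eigenvalues} with parameters $(\eta',r)$, once to $(A_n)$ and once to $(A_n')$ --- both converging to $A_\infty$ --- we obtain $\mu_1(A_n)\to\mu_1(A_\infty)$ and $\mu_1(A_n')\to\mu_1(A_\infty)$, so $|\mu_1(A_n')-\mu_1(A_n)|\to 0$, contradicting $|\mu_1(A_n')-\mu_1(A_n)|\geq\epsilon_0$. The only step that requires genuine care is the extraction of $A_\infty$: one must verify that the uniform two-sided ball condition (equivalently, the uniform $C^{1,1}$ bound) yields both precompactness of $(A_n)$ for Hausdorff convergence of boundaries and membership of the limit in the class. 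This is exactly where the quantitative regularity encoded in $\Pi_+(\eta,r)$ is used, and it is what upgrades the pointwise statement of Proposition~\ref{prop-convergence-eigenvalues} to the uniform continuity asserted here; the passage to a slightly smaller cap $S_+(\eta')$ is only a technical device ensuring that the limit domain remains compactly contained, so that Proposition~\ref{prop-convergence-eigenvalues} applies directly.
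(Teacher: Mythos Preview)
Your proof is correct and follows essentially the same route as the paper: argue by contradiction, use compactness of the uniform $C^{1,1}$ class to extract a limit domain $A_\infty$, and apply Proposition~\ref{prop-convergence-eigenvalues} to each of the two sequences to force $|\mu_1(A_n')-\mu_1(A_n)|\to 0$. The only difference is cosmetic: the paper simply asserts that $\Pi_+(\eta,r)$ is compact for $d_{L^\infty}$ and places the limit $A_*$ in $\Pi_+(\eta,r)$ itself, whereas you pass to a slightly larger cap $S_+(\eta')$ to guarantee that the limit remains compactly contained---your version is arguably a bit more careful on this point, but the underlying argument is identical.
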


\begin{proof}
Suppose the conclusion fails.  Then for some $\epsilon>0$ we can find sequences $(A_n)$, $(A_n')\subset \Pi_+(\eta,r)$ such that $d_{L^\infty}(\partial A_n', \partial A_n)\rightarrow 0$ as $n\rightarrow\infty$ and $|\mu_1(A_n')-\mu_1(A_n)|\geq \epsilon$.  Since $\Pi_+(\eta,r)$ is compact with respect to the topology induced by $d_{L^\infty}(\cdot,\cdot)$, we can find a sequence $(n_k)$ and a set $A_*\in \Pi_+(\eta,r)$ such that $d_{L^\infty}(\partial A_{n_k},\partial A_*)\rightarrow 0$ as $k\rightarrow\infty$.  But this implies that for $k\geq 1$ sufficiently large, one has
\begin{align*}
|\mu_1(A_{n_k})-\mu_1(A_*)|<\epsilon/4.
\end{align*}
On the other hand, since
\begin{align*}
d_{L^\infty}(\partial A_{n_k}',\partial A_*)\leq d_{L^\infty}(\partial A_{n_k}',\partial A_{n_k})+d_{L^\infty}(\partial A_{n_k},\partial A_*),
\end{align*}
with the right-hand side approaching $0$ as $k\rightarrow\infty$, we also have
\begin{align*}
|\mu_1(A_{n_k}')-\mu_1(A_*)|<\epsilon/4
\end{align*}
for $k$ sufficiently large.  This leads to
\begin{align*}
|\mu_1(A_{n_k})-\mu_1(A_{n_k}')|\leq \frac{\epsilon}{2},
\end{align*}
which gives the desired contradiction.
\end{proof}

\subsection{A refined Poincar\'e inequality}

We now give a refined Poincar\'e inequality for smooth perturbations of the convex cone $\mathscr{C}_A$.  We point out that our proof of this estimate exhibits a strong dependence on the assumption $\eta>0$ on the spherical cap $S_+=S_+(\eta)$ (that is, that $S_+$ is a spherical cap strictly smaller than the hemisphere).  
\begin{proposition}
\label{prop-neumann-evalue}
Fix $N\geq 3$, $\eta>0$, and $r>0$.  Let $\Pi_+:=\Pi_+(\eta,r)\subset S_+$ be as in Definition $\ref{def-unif-lip}$, with $S_+=S_+(\eta)$ as in (\ref{def-splus}).  Then there exist $\epsilon=\epsilon(N,\eta,r)$ and $c_1=c_1(\eta)$ satisfying
\begin{align*}
\epsilon>0\quad\textrm{and}\quad c_1(\eta)>\sqrt{N-1}
\end{align*}
such that the following holds: let
$A'\in \Pi_+(\eta,r)$, and assume that there exists $A\in \Pi_+(\eta,r)$ such that 
\begin{align*}
\mathscr{C}_A\quad\textrm{is convex and}\quad d_{L^\infty}(\partial A',\partial A)<\epsilon.
\end{align*}
Then
\begin{align*}
\mu_1(A')>c_1(\eta).
\end{align*}
\end{proposition}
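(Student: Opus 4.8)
The plan is to combine a base lower bound valid for all convex cones with a rigidity statement and a compactness argument. For the base bound I invoke the theorem of Lions--Pacella~\cite{LP} (see also \cite{RR,FI}): if $\mathscr{C}_A$ is convex then $B\cap\mathscr{C}_A$ minimizes the relative perimeter under its volume constraint, so the second variation of the perimeter at this critical set is nonnegative; since its free boundary $\partial B\cap\mathscr{C}_A$ is a piece of the unit sphere (so that the squared norm of its second fundamental form equals $N-1$) and the boundary term is favorably signed by convexity of $\mathscr{C}_A$, nonnegativity of the second variation yields $\int_A|\nabla u|^2\,d\mathcal{H}^{N-1}\ge(N-1)\int_A u^2\,d\mathcal{H}^{N-1}$ for every $u\in W^{1,2}(A)$ with $\int_A u\,d\mathcal{H}^{N-1}=0$, that is, $\mu_1(A)\ge\sqrt{N-1}$.

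The key step is the rigidity: if $\mathscr{C}_A$ is convex and $\mu_1(A)^2=N-1$, then $\mathscr{C}_A$ contains a line. To see this, let $u$ be a first Neumann eigenfunction on $A$, so $-\Delta_{S^{N-1}}u=(N-1)u$ and $\partial_\nu u=0$ on $\partial A$, and set $U(x):=|x|\,u(x/|x|)$ on $\mathscr{C}_A$. The identity $\Delta U=|x|^{-1}\bigl(\Delta_{S^{N-1}}u+(N-1)u\bigr)=0$ shows $U$ is harmonic and homogeneous of degree $1$, and since the radial direction is tangent to $\partial\mathscr{C}_A$ one gets $\partial_\nu U=0$ on $\partial\mathscr{C}_A\setminus\{0\}$. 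The function $h:=|\nabla U|^2$ is homogeneous of degree $0$, is subharmonic (by the Bochner identity, $\Delta h=2|\nabla^2 U|^2\ge0$), and, using convexity of $\mathscr{C}_A$ together with $\partial_\nu U=0$, satisfies $\partial_\nu h=-2\,\mathrm{II}(\nabla U,\nabla U)\le0$ on $\partial\mathscr{C}_A\setminus\{0\}$. Descending to the sphere, the restriction $\hat h$ is subharmonic on $A$ with nonpositive Neumann data, so integrating $\Delta\hat h$ over $A$ forces $\Delta\hat h\equiv0$ and $\partial_\nu\hat h\equiv0$; being harmonic with vanishing Neumann data on the connected set $A$, $\hat h$ is constant, hence so is $|\nabla U|$. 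This constant is nonzero (otherwise $U\equiv0$, so $u\equiv0$), whence $0=\Delta h=2|\nabla^2 U|^2$ shows $U$ is affine, and homogeneity of degree $1$ gives $U(x)=\langle a,x\rangle$ with $a\neq0$. Finally $\langle a,\nu\rangle\equiv0$ on $\partial\mathscr{C}_A$ says the constant field $a$ is tangent to $\partial\mathscr{C}_A$, so $\partial\mathscr{C}_A$, and therefore $\mathscr{C}_A$, is invariant under translations in the direction $a$; thus $\mathscr{C}_A$ contains a line.

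To obtain the uniform bound, set $c^\ast(\eta):=\inf\{\mu_1(A):\mathscr{C}_A\text{ convex},\ \overline A\subseteq S_+(\eta)\}$. Every such cone lies in $\{x_N\ge\eta\,|x|\}$, which contains no line, so by the rigidity $\mu_1(A)>\sqrt{N-1}$ for each admissible $A$; I claim in fact $c^\ast(\eta)>\sqrt{N-1}$, a quantity depending only on $N$ and $\eta$. Indeed, take a minimizing sequence $A_n$; up to a subsequence the (geodesically convex) sets $\overline{A_n}\subseteq\overline{S_+(\eta)}$ have a Hausdorff limit $K$. If $K$ has nonempty interior then $A_\infty:=\mathrm{int}\,K$ is a convex domain whose cone is still contained in $\{x_N\ge\eta\,|x|\}$; the $A_n$ are eventually uniformly Lipschitz (they contain a fixed geodesic ball), so by the stability of $\mu_1$ (Proposition~\ref{prop-convergence-eigenvalues}, in its Lipschitz version) $\mu_1(A_n)\to\mu_1(A_\infty)$, which is $>\sqrt{N-1}$ by the rigidity. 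If instead $K$ is lower-dimensional, then the $A_n$ collapse onto $K$ and $\mu_1(A_n)$ tends either to $+\infty$ (if $K$ is a point) or to the first nonzero eigenvalue of the corresponding weighted Laplacian on $K$; since $K\subseteq\overline{S_+(\eta)}$ is a convex set of diameter strictly less than $\pi$ (because $\eta>0$) while $\mu_1(A_n)^2\ge N-1$ along the sequence, this limit is again $>\sqrt{N-1}$. In all cases $c^\ast(\eta)>\sqrt{N-1}$. Now fix $c_1(\eta)\in(\sqrt{N-1},c^\ast(\eta))$ and let $\epsilon=\epsilon(N,\eta,r)$ be the constant furnished by Corollary~\ref{cor-unif-eigenvalues} for the value $c^\ast(\eta)-c_1(\eta)>0$; if $A'\in\Pi_+(\eta,r)$ is $\epsilon$-close to some $A\in\Pi_+(\eta,r)$ with $\mathscr{C}_A$ convex, then $\overline A\subseteq S_+(\eta)$ gives $\mu_1(A)\ge c^\ast(\eta)$ while $|\mu_1(A')-\mu_1(A)|<c^\ast(\eta)-c_1(\eta)$, so $\mu_1(A')>c_1(\eta)$.

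I expect the main obstacle to be the rigidity together with its quantitative and uniform consequences: one must correctly pin down the sign of $\partial_\nu|\nabla U|^2$ from convexity and run the maximum principle on a noncompact cone with a vertex (handled by passing to the degree-zero function $\hat h$ on $A$), and -- since the limiting $A_\infty$ need not be of class $C^{1,1}$, and since a degenerating sequence of convex domains can collapse onto a lower-dimensional convex set -- one must carry the argument out for merely Lipschitz convex domains and control the strict positivity of the limiting weighted eigenvalue, the point throughout being that the inclusion in $\{x_N\ge\eta\,|x|\}$ keeps the limiting cone uniformly away from containing a line.
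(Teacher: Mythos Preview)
Your overall strategy mirrors the paper's: show $\mu_1(A)>\sqrt{N-1}$ for every convex $A$ in the cap, upgrade this to a uniform lower bound by compactness, then transfer to nearby $A'$ via Corollary~\ref{cor-unif-eigenvalues}. The difference is that the paper simply cites \cite[Theorem~4.3]{Escobar} (and \cite{Mazya1,Mazya2}) for both the inequality $\mu_1(A)\geq\sqrt{N-1}$ and the rigidity, whereas you supply a self-contained argument via the second variation and the harmonic extension $U$. Your rigidity proof is essentially a reproof of Escobar's equality case and is correct for smooth boundaries; applying it to the limiting $A_\infty$, which is only convex hence Lipschitz, would require some care with the boundary identity $\partial_\nu h=-2\,\mathrm{II}(\nabla U,\nabla U)$.

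The genuine gap lies in your compactness step. You take the infimum $c^\ast(\eta)$ over \emph{all} convex $A$ with $\overline A\subseteq S_+(\eta)$, and must therefore treat the case where the minimizing sequence $A_n$ collapses onto a lower-dimensional set $K$. Your assertion that ``this limit is again $>\sqrt{N-1}$'' is not justified: the base bound only gives $\mu_1(A_n)^2\geq N-1$ along the sequence, hence $\geq$ in the limit, not $>$. To obtain strictness you would need a separate rigidity statement for the limiting weighted one-dimensional problem on $K$, and this is not automatic---for a thin wedge in the cap collapsing onto a geodesic arc, the limiting operator is a Legendre-type operator whose first nonzero eigenvalue tends to exactly $N-1$ as $\eta\to 0$. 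The paper sidesteps this entirely by taking the infimum only over $\Pi_+^{\textrm{conv}}:=\{A\in\Pi_+(\eta,r):\mathscr{C}_A\text{ convex}\}$: the uniform interior $r$-ball condition prevents collapse, this class is compact in $d_{L^\infty}$, the limit $A_*$ again lies in $\Pi_+^{\textrm{conv}}$, and the strict inequality $\mu_1(A_*)>\sqrt{N-1}$ follows directly from the cited rigidity. (The resulting constant then also depends on $r$, which is harmless for the application.) Restricting your infimum to $\Pi_+^{\textrm{conv}}$ closes the gap with no further work.
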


\begin{proof}
Set $\Pi_+^{\textrm{conv}}:=\{A\in \Pi_+(\eta,r):\mathscr{C}_A\,\,\textrm{is convex}\}$.  We claim that there exists $c_0(\eta)>\sqrt{N-1}$ such that
\begin{align*}
\mu_1(A)\geq c_0(\eta)
\end{align*}
for all $A\in\Pi_+^{\textrm{conv}}$.  Indeed, suppose that this fails.  Then there exists a sequence $(A_n)$ in $\Pi_+^{\textrm{conv}}$ with $\limsup_{n\rightarrow\infty}\mu_1(A_n)\leq \sqrt{N-1}$.  By standard compactness results, we can find a set $A_*\in \Pi_+^{\textrm{conv}}$ and a subsequence $(A_{n_k})$ of $(A_n)$ with $\partial A_{n_k}$ converging to $\partial A_*$ in $d_{L^\infty}$.  By Proposition \ref{prop-convergence-eigenvalues}, one has
\begin{align*}
\mu_1(A_*)=\lim_{n\rightarrow\infty} \mu_1(A_{n_k})\leq \sqrt{N-1}
\end{align*}
On the other hand, \cite[Theorem $4.3$]{Escobar} (see also \cite[Section $3$]{Mazya1} and \cite[Theorem $4.1$]{Mazya2}) gives the lower bound $\mu_1(A_*)\geq \sqrt{N-1}$.  Moreover, 
since $A_*\in S_+(\eta)$ for $\eta>0$, it follows by \cite[Theorem $4.3$]{Escobar} 
or \cite[Theorem $4.1$(iv)]{Mazya2} that $\mu_1(A_*)>\sqrt{N-1}$, a contradiction that establishes the claim.

We now make the appropriate choice of $c_1(\eta)$, setting
\begin{align*}
c_1(\eta):=\frac{c_0(\eta)+\sqrt{N-1}}{2}
\end{align*}
Since $c_0(\eta)>\sqrt{N-1}$ implies $c_1(\eta)<c_0(\eta)$, the condition $d_{L^\infty}(\partial A, \partial A')<\epsilon$ yields (see Corollary \ref{cor-unif-eigenvalues})
\begin{align*}
\mu_1(A')>\mu_1(A)-(c_0(\eta)-c_1(\eta))\geq c_1(\eta),
\end{align*}
provided $\epsilon$ is sufficiently small.
\end{proof}

\begin{corollary}
\label{cor-poincare}
Fix $N\geq 3$, $\eta>0$, and $r>0$.  Then, for the values of $\epsilon$ and $c_1$ given by Proposition $\ref{prop-neumann-evalue}$, and any $A,A'\in \Pi_+(\eta,r)$ satisfying the conditions 
\begin{align*}
\mathscr{C}_A\quad\textrm{convex and}\quad d_{L^\infty}(\partial A',\partial A)<\epsilon,
\end{align*}
every $u\in W^{1,2}(A')$ satisfies
\begin{align}
\lVert u-\overline{u}\rVert_{L^2(A')}\leq \frac{1}{c_1(\eta)}\lVert \nabla u\rVert_{L^2(A')}\label{eq-poincare}
\end{align}
with $\overline{u}=\frac{1}{\mathcal{H}^{N-1}(A')}\int_{A'} u\,d\mathcal{H}^{N-1}$.
\end{corollary}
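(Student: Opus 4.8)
The plan is to read off the estimate directly from the lower bound $\mu_1(A')>c_1(\eta)$ furnished by Proposition \ref{prop-neumann-evalue}, combined with the variational characterization (\ref{eq-variational}) of $\mu_1(A')$. The hypotheses of the corollary are precisely those of Proposition \ref{prop-neumann-evalue} (same $\eta$, $r$, $\epsilon$, $c_1$, with $\mathscr{C}_A$ convex and $d_{L^\infty}(\partial A',\partial A)<\epsilon$), so that proposition applies verbatim and gives $\mu_1(A')>c_1(\eta)$.

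First I would fix $u\in W^{1,2}(A')$ and set $v:=u-\overline{u}$, where $\overline{u}=\frac{1}{\mathcal{H}^{N-1}(A')}\int_{A'}u\,d\mathcal{H}^{N-1}$. Then $v\in W^{1,2}(A')$, it has vanishing mean, $\int_{A'}v\,d\mathcal{H}^{N-1}=0$, and since $\overline{u}$ is constant one has $\nabla v=\nabla u$ a.e.\ on $A'$, hence $\lVert\nabla v\rVert_{L^2(A')}=\lVert\nabla u\rVert_{L^2(A')}$. If $v\equiv 0$, i.e.\ $u$ is a.e.\ equal to its average, then both sides of (\ref{eq-poincare}) vanish and there is nothing to prove. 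Otherwise $v$ is an admissible competitor in the infimum (\ref{eq-variational}) defining $\mu_1(A')$, so
$$\mu_1(A')\leq \frac{\lVert \nabla v\rVert_{L^2(A')}}{\lVert v\rVert_{L^2(A')}}=\frac{\lVert \nabla u\rVert_{L^2(A')}}{\lVert u-\overline{u}\rVert_{L^2(A')}}.$$
Rearranging this and invoking $\mu_1(A')>c_1(\eta)$ from Proposition \ref{prop-neumann-evalue} gives
$$\lVert u-\overline{u}\rVert_{L^2(A')}\leq \frac{1}{\mu_1(A')}\lVert \nabla u\rVert_{L^2(A')}\leq \frac{1}{c_1(\eta)}\lVert \nabla u\rVert_{L^2(A')},$$
which is exactly (\ref{eq-poincare}).

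There is no genuine obstacle here: the corollary is just a restatement of Proposition \ref{prop-neumann-evalue} in the form of a Poincar\'e inequality that will be convenient to use directly in the proof of Theorem \ref{clm11}. The only minor points to attend to are the degenerate case in which $u$ coincides with its mean (handled separately above, where the inequality is trivial) and the elementary observation that subtracting a constant leaves the Dirichlet energy unchanged, so that $v=u-\overline{u}$ is a legitimate test function realizing a quotient no smaller than $\mu_1(A')$.
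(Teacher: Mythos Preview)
Your proof is correct and follows essentially the same approach as the paper: both apply the variational characterization (\ref{eq-variational}) to the mean-zero function $u-\overline{u}$, note that $\nabla(u-\overline{u})=\nabla u$, handle the trivial case $u\equiv\overline{u}$ separately, and then invoke the bound $\mu_1(A')>c_1(\eta)$ from Proposition \ref{prop-neumann-evalue}.
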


The proof of Corollary \ref{cor-poincare} follows from the classical observation 
that the variational characterization \eqref{eq-variational} of $\mu_1$ leads to identification of the sharp constant for Poincare inequalities of the form given in ($\ref{eq-poincare}$); for further details, see \cite[pp. 925--927]{DautrayLions}, as well as \cite{BC} and \cite{Ruiz} in which related uniform Poincar\'e estimates are obtained.

\begin{proof}
We apply (\ref{eq-variational}) to estimate norms of $u-\overline{u}$.  We can assume that $u-\overline{u}$ is not identically $0$ on $A'$.  Then, since 
\begin{align*}
\int_{A'} (u-\overline{u})\,d\mathcal{H}^{N-1}=\int_{A'} u\,d\mathcal{H}^{N-1}-\overline{u}\int_{A'}d\mathcal{H}^{N-1}=0,
\end{align*}
we obtain, from (\ref{eq-variational}),
\begin{align*}
\lVert u-\overline{u}\rVert_{L^2(A')}\leq \frac{1}{\mu_1(A')}\bigg(\int_{A'} |\nabla u|^2d\mathcal{H}^{N-1}\bigg)^{1/2},
\end{align*}
where we used that $\nabla (u-\overline{u})=\nabla u$ on $A'$.  The desired conclusion now follows from the bound $1/\mu_1(A')<1/c_1(\eta)$ ensured by Proposition $\ref{prop-neumann-evalue}$.
\end{proof}

\begin{remark}{\rm
An alternative argument, closer to the spirit of \cite{F}, can be given based on an eigenfunction expansion of $u$ with respect to the Laplace-Beltrami operator with vanishing Neumann boundary conditions.  For a description of this approach, see the outline for the sharp Poincar\'e inequality in the sphere and half-sphere given by Exercise 21 on pg. 538 of \cite{D}.}
\end{remark}

\section{Proof of Theorem $\ref{clm11}$}

In this section, we give the proof of our main result, Theorem $\ref{clm11}$, which gives the characterization of isoperimetric sets as sectors of balls of appropriate radius to match the volume constraint.

We begin with a preliminary lemma giving a lower bound for the relative perimeter of ``nearly sector''-type sets.  Let  $A\in \Pi_+(\eta,r)$, and let $E$ be a star-shaped subset of $\mathscr{C}_A$ which intersects every ray of 
$\mathscr{C}_A$.
For each $\xi\in A$ we define 
\begin{align*}
u(\xi):=\sup\, \Big\{t>-1:(1+t)\xi\in E\Big\},
\end{align*}
so that 
$$
E=\{t \xi: 0<t<u(\xi),\, \xi \in A\},
$$
and
$\partial E\cap \mathscr{C}_A$ can be parametrized as
\begin{align}
(1+u(\xi))\xi,\quad \xi\in A.\label{eq-un}
\end{align}
This yields the integral formula
\begin{align}
P(E;\mathscr{C}_{A})&=\int_{A} (1+u(\xi))^{N-1}\sqrt{1+\frac{|\nabla u(\xi)|^2}{(1+u(\xi))^2}}\,d\mathcal{H}^{N-1}(\xi).\label{eq-area}
\end{align}

As in \cite{F}, the argument is based on an analysis of leading order terms for this area integrand.  Accordingly, we will need several elementary inequalities, essentially identical to those used for the same purpose in \cite{F}.  We collect these here for the convenience of the reader.  They are
\begin{align}
(1+t)^k\geq 1-k|t|,\quad t\in\mathbb{R}, k\in\mathbb{N},\label{eq-tk}
\end{align}
and
\begin{align}
1+\tfrac{t}{2}-\tfrac{t^2}{8}\leq \sqrt{1+t}\leq 1+\tfrac{t}{2},\quad t\geq 0,\label{eq-sqrt}
\end{align}
together with
\begin{align}
\nonumber 1-\frac{N-1}{2}t^2-C|t|^3&\leq (1+t)^{N-1}-\frac{N-1}{N}\Big((1+t)^N-1\Big)\\
&\leq 1-\frac{N-1}{2}t^2+C|t|^3,\label{eq-poly1}
\end{align}
and
\begin{align}
|(1+t)^N-1-Nt|\leq Ct^2,\label{eq-poly2}
\end{align}
each holding for $t\in\mathbb{R}$ with $|t|<1$, where in each of ($\ref{eq-poly1}$) and ($\ref{eq-poly2}$) the constant $C>0$ may depend on $N$.

We now state and prove our lower bound for the relative perimeter.

\begin{lemma}
\label{lem1}
For each $\delta>0$ there exists $\epsilon>0$ such that for all  $A\in \Pi_+(\eta,r)$  and $E\subset \mathscr{C}_A$ as above with $|E|=\mathcal{H}^{N-1}(A)/N$ and having smooth boundary, if the function $u:A\rightarrow \mathbb{R}$ defined as in (\ref{eq-un}) satisfies
\begin{align}
\lVert u\rVert_{L^\infty(A)}<\epsilon\quad\textrm{and}\quad \lVert\nabla u\rVert_{L^\infty(A)}<\epsilon\label{hyp-1}
\end{align}
then
\begin{align*}
P(E;\mathscr{C}_{A})&\geq \mathcal{H}^{N-1}(A)\\
&\hspace{0.4in}+\frac{1-\delta}{2}\Bigl(\lVert \nabla u\rVert_{L^2(A)}^2-(N-1)\lVert u\rVert_{L^2(A)}^2\Bigr)-\frac{\delta}{2}\lVert u\rVert_{L^2(A)}^2.
\end{align*}
\end{lemma}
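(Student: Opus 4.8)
The plan is to start from the area formula \eqref{eq-area} and perform a Taylor expansion of the integrand in the two small quantities $u$ and $|\nabla u|/(1+u)$, using the elementary inequalities \eqref{eq-tk}--\eqref{eq-poly2}. First I would write
$$
(1+u)^{N-1}\sqrt{1+\frac{|\nabla u|^2}{(1+u)^2}}
= (1+u)^{N-1} + (1+u)^{N-1}\left(\sqrt{1+\frac{|\nabla u|^2}{(1+u)^2}}-1\right),
$$
and handle the two pieces separately. For the second piece, \eqref{eq-sqrt} applied with $t = |\nabla u|^2/(1+u)^2$ gives that it equals $\tfrac12(1+u)^{N-3}|\nabla u|^2$ up to an error controlled by $(1+u)^{N-1}\cdot t^2 \lesssim |\nabla u|^4$; since $\|\nabla u\|_{L^\infty}<\epsilon$ and $\|u\|_{L^\infty}<\epsilon$, we get $(1+u)^{N-3}\geq 1-C\epsilon$, so after integrating this contributes at least $\tfrac{1-C\epsilon}{2}\|\nabla u\|_{L^2(A)}^2 - C\epsilon\|\nabla u\|_{L^2(A)}^2$, which for $\epsilon$ small is $\geq \tfrac{1-\delta/2}{2}\|\nabla u\|_{L^2(A)}^2$, say. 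For the first piece, I would use \eqref{eq-poly1} to write
$$
(1+u)^{N-1} \geq 1 - \frac{N-1}{2}u^2 - C|u|^3 + \frac{N-1}{N}\bigl((1+u)^N-1\bigr),
$$
and then integrate over $A$.

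The key point in handling the first piece is the volume constraint. Since $E=\{t\xi:0<t<1+u(\xi),\ \xi\in A\}$, we have
$$
|E| = \int_A \int_0^{1+u(\xi)} t^{N-1}\,dt\,d\mathcal{H}^{N-1}(\xi) = \frac{1}{N}\int_A (1+u(\xi))^N\,d\mathcal{H}^{N-1}(\xi),
$$
so the hypothesis $|E|=\mathcal{H}^{N-1}(A)/N$ is exactly $\int_A \bigl((1+u)^N-1\bigr)\,d\mathcal{H}^{N-1}=0$. Therefore the term $\frac{N-1}{N}\int_A\bigl((1+u)^N-1\bigr)\,d\mathcal{H}^{N-1}$ vanishes, and integrating the displayed inequality gives
$$
\int_A (1+u)^{N-1}\,d\mathcal{H}^{N-1} \geq \mathcal{H}^{N-1}(A) - \frac{N-1}{2}\|u\|_{L^2(A)}^2 - C\|u\|_{L^\infty(A)}\|u\|_{L^2(A)}^2,
$$
and the last error term is $\leq C\epsilon\|u\|_{L^2(A)}^2$.

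Combining the two pieces, I obtain
$$
P(E;\mathscr{C}_A) \geq \mathcal{H}^{N-1}(A) + \frac{1-\delta/2}{2}\|\nabla u\|_{L^2(A)}^2 - \frac{N-1}{2}\|u\|_{L^2(A)}^2 - C\epsilon\|u\|_{L^2(A)}^2.
$$
To finish I would rearrange: write $\frac{1-\delta/2}{2}\|\nabla u\|_{L^2}^2 = \frac{1-\delta}{2}\|\nabla u\|_{L^2}^2 + \frac{\delta/2}{2}\|\nabla u\|_{L^2}^2 \geq \frac{1-\delta}{2}\|\nabla u\|_{L^2}^2$ and note that $-\frac{N-1}{2}\|u\|_{L^2}^2 = -\frac{1-\delta}{2}(N-1)\|u\|_{L^2}^2 - \frac{\delta}{2}(N-1)\|u\|_{L^2}^2$; absorbing the $-C\epsilon\|u\|_{L^2}^2$ term into the final $-\tfrac{\delta}{2}\|u\|_{L^2}^2$ slack (choosing $\epsilon$ small enough that $C\epsilon + \tfrac{\delta}{2}(N-1) \leq \tfrac{\delta}{2}$ after relabeling, or more carefully tracking constants so the bookkeeping matches the stated inequality exactly) yields the claim. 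The main obstacle — really the only delicate point — is the bookkeeping: one must split the $\delta$-budget carefully between the gradient term, the $(N-1)\|u\|_{L^2}^2$ correction, and the cubic/quartic error terms, and verify that the $L^\infty$ smallness from \eqref{hyp-1} is genuinely enough to make every error term fit inside the allotted slack, uniformly over $A\in\Pi_+(\eta,r)$ (which is automatic here since the constants $C$ in \eqref{eq-poly1}--\eqref{eq-poly2} depend only on $N$, and $\mathcal{H}^{N-1}(A)$ is bounded).
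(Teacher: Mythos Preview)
Your approach is essentially identical to the paper's: expand the area integrand via \eqref{eq-sqrt}, handle the prefactor $(1+u)^{N-3}$ on the gradient term with \eqref{eq-tk}, use the volume constraint $\int_A\bigl((1+u)^N-1\bigr)\,d\mathcal{H}^{N-1}=0$ to eliminate that integral, and apply \eqref{eq-poly1} to extract the $-\tfrac{N-1}{2}u^2$ contribution. You reach the same intermediate bound the paper obtains.

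The one place your argument does not close is precisely the spot you flag. The parenthetical condition $C\epsilon+\tfrac{\delta}{2}(N-1)\le\tfrac{\delta}{2}$ is impossible for $N\ge 3$, so the rearrangement you sketch cannot produce the stated inequality, and ``relabeling'' alone will not fix it: once you have separated the bound into $\tfrac{1-\delta/2}{2}\lVert\nabla u\rVert_{L^2}^2$ and $-\tfrac{N-1}{2}\lVert u\rVert_{L^2}^2-C\epsilon\lVert u\rVert_{L^2}^2$, the coefficient on $-\lVert u\rVert_{L^2}^2$ already exceeds the target $\tfrac{(1-\delta)(N-1)+\delta}{2}$. The paper organizes this last step differently. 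Rather than splitting $-\tfrac{N-1}{2}\lVert u\rVert_{L^2}^2$ after the fact, it writes the intermediate estimate directly in the grouped form
\[
\frac{(1-\delta_0)^2}{2}\Bigl(\lVert\nabla u\rVert_{L^2}^2-(N-1)\lVert u\rVert_{L^2}^2\Bigr)-C_*(\epsilon,\delta_0)\,\lVert u\rVert_{L^2}^2,
\]
with the explicit remainder $C_*(\epsilon,\delta_0)=\tfrac{N-1}{2}\bigl(C\epsilon-\delta_0^2+2\delta_0\bigr)$, and then chooses first $\epsilon$ small (so that $C\epsilon<\delta_0^2$, hence $C_*<\delta_0(N-1)$) and afterwards $\delta_0$ small (so that $(1-\delta_0)^2>1-\delta$ and $(N-1)\delta_0<\delta$). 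The point is to pull the same small factor $(1-\delta_0)^2$ through \emph{both} $\lVert\nabla u\rVert_{L^2}^2$ and $(N-1)\lVert u\rVert_{L^2}^2$ before isolating the residual error; this is the ``more careful tracking of constants'' you defer.
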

\begin{proof}
We argue as in \cite{F}.  Using ($\ref{eq-area}$) and ($\ref{eq-sqrt}$), write
\begin{align}
\nonumber P(E;\mathscr{C}_{A})&\geq \int_{A} (1+u)^{N-1}d\mathcal{H}^{N-1}\\
&\hspace{0.4in}+\frac{1}{2}\int_{A} (1+u)^{N-3}|\nabla u|^2\bigg(1-\frac{|\nabla u|^2}{4(1+u)^2}\bigg)d\mathcal{H}^{N-1}\label{A1}
\end{align}

Let $\delta>0$ be given, and fix $0<\epsilon<1$ to be chosen later in the argument.  Let $0<\delta_0<1$ be a parameter which will also be chosen later.  Suppose that $u$ (and thus $E$) is such that both inequalities in ($\ref{hyp-1}$) are satisfied.  We then have
\begin{align*}
\frac{|\nabla u|^2}{4(1+u)^2}&<\frac{\epsilon^2}{4(1-\epsilon)^2},
\end{align*}
which is bounded by $\delta_0$ for $\epsilon$ sufficiently small.  The right-hand side of ($\ref{A1}$) is thus bounded from below by 
\begin{align*}
&\int_{A} (1+u)^{N-1}+\frac{1-\delta_0}{2}(1+u)^{N-3}|\nabla u|^2d\mathcal{H}^{N-1}\\
&\hspace{0.2in}\geq \int_{A} (1+u)^{N-1}+\frac{1-\delta_0}{2}\Big(1-(N-3)\lVert u\rVert_{L^\infty(A)}\Big)|\nabla u|^2d\mathcal{H}^{N-1}
\end{align*}
where to obtain the last inequality we have used the elementary bound ($\ref{eq-tk}$) (with $k=N-3$).  Since we have assumed $\lVert u\rVert_{L^\infty}<\epsilon$, this last expression is larger than
\begin{align*}
\int_{A} (1+u)^{N-1}+\frac{(1-\delta_0)^2}{2}|\nabla u|^2d\mathcal{H}^{N-1}
\end{align*}
provided that $\epsilon$ is chosen sufficiently small to ensure $(N-3)\epsilon<\delta_0$.

We now estimate the first term from below. 
Using polar coordinates, we see that the condition $|E|=\mathcal{H}^{N-1}(A)/N$ 
is equivalent to
\begin{align}
\int_{A} 1\,d\mathcal{H}^{N-1}=\mathcal{H}^{N-1}(A)=\int_{A} (1+u)^Nd\mathcal{H}^{N-1}\label{eq-volcst}
\end{align}
therefore 
\begin{align*}
\int_{A} (1+u)^{N-1}d\mathcal{H}^{N-1}&=\int_{A} (1+u)^{N-1}d\mathcal{H}^{N-1}-\frac{N-1}{N}\int_{A} \Big((1+u)^{N}-1\Big)d\mathcal{H}^{N-1}\\
&\geq \int_A \Big(1-\frac{N-1}{2}u^2-C|u|^3\Big)d\mathcal{H}^{N-1}\\
&\geq \mathcal{H}^{N-1}(A)-(1+C\epsilon)\frac{N-1}{2}\int_A u^2d\mathcal{H}^{N-1},
\end{align*}
where to obtain the first inequality we have used the first inequality in ($\ref{eq-poly1}$), while to obtain the second inequality we have again used the hypothesis $|u|<\epsilon$.

Putting this bound together with the above estimates, we have shown 
\begin{align}
\nonumber &P(E;\mathscr{C}_{A})-\mathcal{H}^{N-1}(A)\\
\nonumber &\hspace{0.2in}\geq \frac{(1-\delta_0)^2}{2}\lVert \nabla u\rVert_{L^2(A)}^2 -(1+C\epsilon)\frac{N-1}{2}\lVert u\rVert_{L^2(A)}^2\\
&\hspace{0.2in}=\frac{(1-\delta_0)^2}{2}\Big(\lVert \nabla u\rVert_{L^2(A)}^2 -(N-1)\lVert u\rVert_{L^2(A)}^2\Big)-C_*(\epsilon,\delta_0)\lVert u\rVert_{L^2(A)}^2\label{A3}.
\end{align}
with
\begin{align*}
C_*(\epsilon,\delta_0)=\frac{(C\epsilon-\delta_0^2+2\delta_0)(N-1)}{2}.
\end{align*}

Choosing $\epsilon$ sufficiently small so that $C\epsilon<\delta_0^2$, and then choosing $\delta_0$ sufficiently small so that $(1-\delta_0)^2>1-\delta$ and $(N-1)\delta_0<\delta$, this completes the proof of the lemma.
\end{proof}

With the proof of Lemma $\ref{lem1}$ complete, we are ready to establish Theorem $\ref{clm11}$.  The proof is an argument by contradiction: supposing that the desired characterization of isoperimetric sets fails, we choose a sequence of minimizing sets which exhibits this, and then apply compactness results for almost minimizers of perimeter together with: (i) the characterization obtained in \cite{LP} (see also \cite{M} and \cite{FI}) of isoperimetric sets in the convex cones $\mathscr{C}_{A_*}$, (ii) the regularity theory of almost minimal surfaces, (iii) the lower bound of Lemma $\ref{lem1}$, and (iv) the Poincar\'e inequality given in Corollary $\ref{cor-poincare}$.  This procedure allows us to show that the minimizing sets, being close to sections of balls, are in fact such sections themselves (this is the outcome of a ``stability'' type inequality in the spirit of Fuglede's inequality for nearly-spherical sets \cite{F}), giving the desired contradiction.

\begin{proof}[Proof of Theorem $\ref{clm11}$]
Let $r>0$, and suppose the claim fails.  Then there exist sequences $(A_{*,n})$, $(A_n)\subset\Pi_+(\eta,r)$ such that each $\mathscr{C}_{A_{*,n}}$ is convex, 
\begin{align}
d_{L^\infty}(\partial A_n,\partial A_{*,n})\rightarrow 0\label{eq-plus-1}
\end{align}
as $n\rightarrow\infty$, and for each $n\geq 1$ there exists a value $m_n>0$ and a set of finite perimeter $E_n\subset \mathscr{C}_{A_n}$ with 
\begin{enumerate}
\item[(i)] $|E_n|=m_n$ for each $n\geq 1$,
\item[(ii)] each $E_n$ minimizes $P(\,\cdot\,;\mathscr{C}_{A_n})$ with respect to the constraint $|\cdot|=m_n$, 
\item[(iii)] $E_n\neq B_{m_n;A_n}\cap \mathscr{C}_{A_n}$,
\end{enumerate}
where for each $m\geq 0$ and $A\subset S_+$, $B_{m;A}$ denotes the ball centered at the origin with radius chosen so that $|B\cap \mathscr{C}_A|=m$.

Rescale each set $E_n$ so that
\begin{align*}
|E_n|=\frac{\mathcal{H}^{N-1}(A_n)}{N}. 
\end{align*}
The sequence of rescaled sets $(E_n)$ then retains the properties (i)--(iii), with $m_n=\mathcal{H}^{N-1}(A_n)/N$ for $n\geq 1$.  Moreover, the sets $E_n$ are almost minimizers for the perimeter functional in the cones $\mathscr{C}_{A_n}$ 
in the sense of Almgren \cite{Almgren}; this notion corresponds to the presence of the volume constraint, and asserts that there exist $\Lambda \in [0,\infty)$ and $r>0$ such that 
\begin{align*}
P(E_n;B\cap \mathscr{C}_{A_n})\leq P(F;B\cap \mathscr{C}_{A_n})+\Lambda|E_n\Delta F|,
\end{align*}  
whenever $B$ is a ball of radius at most $r$ and the symmetric difference $E_n\Delta F$ is  contained in $B\cap \mathscr{C}_{A_n}$.  We refer to \cite[Part III]{Maggi} for precise definitions and a comprehensive treatment of the relevant results; see also \cite{Giusti} for another treatment of the theory.

We next invoke compactness results for convex sets and almost minimizers of perimeter.  Concerning the convex sets $\mathscr{C}_{A_{*,n}}$, we can find $A_*\in \Pi_+(\eta,r)$ with $\mathscr{C}_{A_*}$ convex such that, after extracting a subsequence,
\begin{align*}
d_{L^\infty}(\partial A_{*,n},\partial A_*)\rightarrow 0
\end{align*}
as $n\rightarrow\infty$; in view of ($\ref{eq-plus-1}$), this also gives $d_{L^\infty}(\partial A_n,\partial A_*)\rightarrow 0$ as $n\rightarrow\infty$.  Turning to the sets $E_n$, 
the argument in \cite{RR} shows that  $E_n$ cannot lose mass nor perimeter at infinity.
Hence, \cite[Proposition 21.13 and Theorem 21.14]{Maggi} lead to existence of a set of finite perimeter $E_*\subset \mathbb{R}^N$ such that, after again extracting a subsequence, 
\begin{align*}
|E_n\Delta E_*|\rightarrow 0
\end{align*}
as $n\rightarrow\infty$ and
\begin{align*}
P(E_*;\mathscr{C}_{A_*})\leq \liminf_{n\rightarrow\infty} P(E_n;\mathscr{C}_{A_n}).
\end{align*}

Standard arguments now show that $|E_*|=m_*:=\mathcal{H}^{N-1}(A_*)/N$ and that $E_*$ is a minimizer for $P(\, \cdot\, ;\mathscr{C}_{A_*})$ subject to the constraint $|\cdot|=m_*$.  It therefore follows from the result of \cite{LP} (see also \cite{M,FI} for related results) that $E_*=B_{m_*;A_*}\cap A_*$.
Then, it follows by the regularity theory for almost minimizers both in the interior (see for instance \cite[Part III]{M}) and up to the boundary (see for instance \cite{DM}) that $\partial E_n$
can be written as a $C^1$ graph over $A_n$.

To conserve notation, we let $(A_n)$ and $(E_n)$ denote the subsequences chosen in the compactness argument above, and fix $n\geq 1$.  
For each $\xi\in A_n$, write 
\begin{align*}
u_n(\xi)=\sup\, \Big\{t>-1:(1+t)\xi\in E_n\Big\},
\end{align*}
so that $\partial E_n\cap \mathscr{C}_{A_n}$ has the parametrization
\begin{align*}
(1+u_n(\xi))\xi,\quad \xi\in A_n.
\end{align*}
A first variation argument gives $(\nabla u_n)\cdot \nu=0$ on $\partial A_n$ (here, for each $x\in \partial A_n$, $\nu=\nu(x)$ is the unit normal to $\partial \mathscr{C}_{A_n}$ at $x$; this boundary condition corresponds to the fact that since we are minimizing the relative perimeter, the minimal surface attaches to $\partial \mathscr{C}_{A_n}$ orthogonally).

Observe that the convergence of $E_n$ to $E_*$ (which is a sector of a ball in $\mathbb{R}^N$) and the almost minimality of the sets $E_n$ with respect to the perimeter functional imply $\lVert u_n\rVert_{L^\infty(A_n)}\rightarrow 0$ and $\lVert \nabla u_n\rVert_{L^\infty(A_n)}\rightarrow 0$ as $n\rightarrow\infty$.  Fixing $\delta>0$, we may therefore apply Lemma $\ref{lem1}$ to obtain
\begin{align}
\nonumber P(E_n;\mathscr{C}_{A_n})&\geq \mathcal{H}^{N-1}(A_n)\\
&\hspace{0.2in}+\frac{1-\delta}{2}\Bigl(\lVert \nabla u_n\rVert_{L^2(A_n)}^2-(N-1)\lVert u_n\rVert_{L^2(A_n)}^2\Bigr)-\frac{\delta}{2}\lVert u_n\rVert_{L^2(A_n)}^2\label{eq-lowerbd-n}
\end{align}
for $n$ sufficiently large (depending on $\delta$).

We now prepare to apply the Poincar\'e inequality of Corollary $\ref{cor-poincare}$.  For this, we require some bounds on $\overline{u}_n:=\frac{1}{\mathcal{H}^{N-1}(A_n)}\int_{A_n} u_nd\mathcal{H}^{N-1}$.  To obtain these, we fix $\epsilon>0$ and appeal to ($\ref{eq-volcst}$) as in the proof of Lemma $\ref{lem1}$, which gives
\begin{align*}
|\overline{u}_n|&=\frac{1}{\mathcal{H}^{N-1}(A_n)}\bigg|\int_{A_n} \bigg(u_n-\frac{1}{N}\Big((1+u_n)^N-1\Big)\bigg)d\mathcal{H}^{N-1}\bigg|\\
&\leq \frac{C}{\mathcal{H}^{N-1}(A_n)}\int_{A_n} |u_n|^2d\mathcal{H}^{N-1}\\
&\leq \frac{C\epsilon}{\mathcal{H}^{N-1}(A_n)}\int_{A_n} |u_n|\,d\mathcal{H}^{N-1},
\end{align*}
where in last inequality we have recalled that $|u_n|<\epsilon$ for $n$ sufficiently large.

An application of the triangle inequality followed by H\"older's inequality now leads to the bound
\begin{align*}
|\overline{u}_n|&\leq \frac{C\epsilon}{\mathcal{H}^{N-1}(A_n)}\int_{A_n} |u_n-\overline{u}_n|\,d\mathcal{H}^{N-1}+C\epsilon|\overline{u}_n|\\
&\leq \frac{C\epsilon}{\mathcal{H}^{N-1}(A_n)^{1/2}}\left(\int_{A_n} |u_n-\overline{u}_n|^2d\mathcal{H}^{N-1}\right)^{1/2}+C\epsilon|\overline{u}_n|.
\end{align*}
Hence, for $\epsilon<1/C$ (where $C>0$ is a dimensional constant), we obtain
\begin{align*}
|\overline{u}_n|&\leq \frac{C\epsilon}{(1-C\epsilon)\mathcal{H}^{N-1}(A_n)^{1/2}}\lVert u_n-\overline{u}_n\rVert_{L^2(A_n)}
\end{align*}
for all $n\geq n_0(\epsilon)$, with $n_0(\epsilon)$ sufficiently large.

With this bound on $\overline{u}_n$ in hand, we are now ready to apply Corollary $\ref{cor-poincare}$.  Returning to ($\ref{eq-lowerbd-n}$), we obtain
\begin{align*}
&P(E_n;\mathscr{C}_{A_n})-\mathcal{H}^{N-1}(A_n)\\
&\hspace{0.2in}\geq \frac{1-\delta}{2}\lVert \nabla u_n\rVert_{L^2(A_n)}^2-\bigg(\frac{(N-1)(1-\delta)}{2}+\frac{\delta}{2}\bigg)\\
&\hspace{2.2in} \cdot\bigg(\lVert u_n-\overline{u}_n\rVert_{L^2(A_n)}+|\overline{u}_n|\mathcal{H}^{N-1}(A_n)^{1/2}\bigg)^2\\
&\hspace{0.2in}\geq \frac{1-\delta}{2}\lVert \nabla u_n\rVert_{L^2(A_n)}^2-\bigg(\frac{(N-1)(1-\delta)}{2}+\frac{\delta}{2}\bigg)\\
&\hspace{2.2in} \cdot\bigg(1+\frac{C\epsilon}{1-C\epsilon}\bigg)^2\lVert u_n-\overline{u}_n\rVert_{L^2(A_n)}^2,
\end{align*}
so that application of the estimate in Corollary $\ref{cor-poincare}$ gives
\begin{align*}
&P(E_n;\mathscr{C}_{A_n})-\mathcal{H}^{N-1}(A_n)\geq C(N,\eta,\epsilon,\delta)\lVert \nabla u_n\rVert_{L^2(A_n)}^2
\end{align*}
for $n$ sufficiently large (depending on $\epsilon$ and $\delta$), with
\begin{align*}
C(N,\eta,\epsilon,\delta):=\frac{1-\delta}{2}-\bigg(\frac{(N-1)(1-\delta)}{2}+\frac{\delta}{2}\bigg)\bigg(1+\frac{C\epsilon}{1-C\epsilon}\bigg)^2\frac{1}{c_1(\eta)^2}.
\end{align*}

On the other hand, since $E_n$ minimizes $E\mapsto P(E;\mathscr{C}_{A_n})$ among sets of finite perimeter with $|E|=\mathcal{H}^{N-1}(A_n)/N$, we have
\begin{align*}
P(E_n;\mathscr{C}_{A_n})-\mathcal{H}^{N-1}(A_n)\leq P(B;\mathscr{C}_{A_n})-\mathcal{H}^{N-1}(A_n)=0
\end{align*}
where $B\subset\mathbb{R}^N$ is the unit ball centered at the origin.

Since
\begin{align*}
\lim_{\epsilon,\delta\rightarrow 0} C(N,\eta,\epsilon,\delta)=\frac{1}{2}-\frac{N-1}{2c_1(\eta)^2}>0
\end{align*}
we may choose $\epsilon_0>0$ and $\delta_0>0$ such that $C(N,\eta,\epsilon_0,\delta_0)>0$.  It follows that there exists $n_1=n_1(\epsilon_0,\delta_0)$ such that for all $n\geq n_1$ we have
\begin{align*}
0\leq C(N,\eta,\epsilon_0,\eta_0)\lVert \nabla u_n\rVert_{L^2(A_n)}^2\leq 0,
\end{align*}
giving $\lVert \nabla u_n\rVert_{L^2(A_n)}^2=0$.  This implies that $u_n$ is constant, so that the volume constraint ensures $u_n\equiv 1$ and thus $E_n=B\cap \mathscr{C}_{A_n}$, contradicting our initial choice of the sets $E_n$ (see condition (iii) at the beginning of the argument).  This completes the proof of the theorem.
\end{proof}

We conclude by pointing out several issues of interest in connection with the above arguments.

\begin{enumerate}
\item In view of Theorem $\ref{clm11}$, it is natural to ask if the result can be extended to cones $\mathscr{C}_A$ with $A\subset S_{>0}:=\{\xi\in S^{N-1}:\pi_N(\xi)>0\}$.  In this case, the constant $c_1$ in the relevant Poincar\'e inequality is no longer necessarily strictly greater than $\sqrt{N-1}$, and a finer analysis would be required.  

Nevertheless, it is worth pointing out that it may be possible for the compactness argument presented here to be adapted to give some information: directly expanding $u_n=\sum_{k=0}^\infty c_{n,k}Z_k^{(n)}$ as a sum of $A_n$--eigenfunctions in the spirit of the remark at the end of Section 2.3, the key issue becomes to obtain (uniform in $n$) smallness of $|c_{0,n}|$ and $|c_{1,n}|$ in comparison with $\lVert u_n\rVert_{L^2}$.  In the argument of \cite{F} treating the unconstrained case, i.e., $A=S^{N-1}$, such control is obtained as a consequence of the translation invariance of the problem (fixing the barycenter at the origin).
\item In \cite{LP}, the authors give explicit counterexamples of non-convex domains for which the volume-constrained minimizer is not a section of a ball.  These examples take the form of the union of two (relatively large) disjoint cones joined by a thin ``neck.''  In view of our results, one might ask if there is a measure-theoretic characterization of cones for which the implication ``isoperimetric minimizer implies section of a ball'' holds.  

In a related matter, observing that the counterexample coincides with the classical counterexample of Courant and Hilbert for stability of the Neumann eigenvalues under domain perturbations (see \cite{Courant}, \cite{HempelSecoSimon}), it would be interesting to further investigate possible connections between these related topics.
\end{enumerate}

\end{document}